\numberwithin{theorem}{section}
\begin{document}
\title{Moment estimates for the exponential sum with higher divisor functions}
\author{Mayank Pandey}
\address{Department of Mathematics, California Institute of Technology, Pasadena, CA 91125}
\email{mpandey@caltech.edu}
\maketitle

\section{Introduction}

For a sequence $(a_n)_{n\ge 1}$ of arithmetic interest, it is often desirable to have estimates for the $L^p$ norms of the exponential sum $M(\alpha) = \sum_{n\le X}a_ne(n\alpha)$ as $X$ grows. Such estimates are
useful in applications of the circle method. In addition, sufficiently strong estimates for them can yield estimates for the distribution function 
$\{\alpha\in [0, 1] : |M(\alpha)|\ge \lambda\}$ for $\lambda$ in appropriate ranges. 

In the case that $a_n$ is $1$ if $n$ is a $k$th power and $0$ otherwise, such estimates have connections to Waring's problem, and the consequences of conjectured estimates for $\int_0^1 |M(\alpha)|^sd\alpha$
for $s$ in various ranges have been studied by Vaughan and Wooley \cite{VW}. 

This problem was also studied by Keil \cite{K} in the case of the indicator function of $k$-free numbers, and the size of $\int_0^1 |M(\alpha)|^sd\alpha$ was estimated up to a constant 
factor for all $s\ne 1 + \frac{1}{k}$, and in the case $s = 1 + \frac{1}{k}$, it was only determined up to a factor of $\log X$. 

In general, when higher values of $s$ are considered, as long as the sequence in question has some structure in arithmetic progressions, the bulk of the contribution ends up coming from narrow regions near a 
small number of points (typically rationals with small denominator). For this reason, one typically expects that $\int_0^1 |M(\alpha)|^sd\alpha$ is between 
$X^{-\eps}A_s(X)$ and $X^{\eps}A_s(X)$ with $A_s(X)$ equal to either $X^{\alpha_1s}$ or $X^{\alpha_1s} + X^{\alpha_2s - \sigma_1}$ for some $\alpha_1 < \alpha_2$, and some $\sigma_1 > 0$. 
The second case is what happens in the case of $k$-free numbers, as shown in \cite{K}, as well what is conjectured in the case of $k$th powers (see \cite{VW} for more discussion of this). 
In the case of the M\"obius function, the first case is conjectured (it is implied by Mertens conjecture that $|M(\alpha)|\ll X^{1/2 + \eps}$). 

In this paper, we study the case of divisor functions and high moments. In particular, let $k\ge 2$ be some integer, and $s > 2$ be real. Then, let 
\[\tau_k(n) = \sum_{d_1\dots d_k = n} 1\]
and 
\[M(\alpha) = \sum_{n\le X} \tau_k(n)e(n\alpha).\]
Our main result is the following.
\begin{theorem}
We have 
\[\int_0^1 |M(\alpha)|^sd\alpha = X^{s -  1}(\log X)^{s(k - 1)}\sum_{\ell\ge 0}\frac{\gamma_{\ell, s, k}}{(\log X)^\ell} 
+ O(X^{s - 1 - \delta_{s, k} + \eps})\]
with
\begin{align*}
\delta_{s, k} = \frac{2(s - 2)}{(s + 7)(k + 1) + 2}.
\end{align*}
for some coefficients $\gamma_{s, k,\ell}$ satisfying the bound $|\gamma_{s, k,\ell}|\ll \exp(O(\ell))$, with $\gamma_{s, k, 0} > 0$. 
\label{thm:main_result}
\end{theorem}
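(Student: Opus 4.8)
The plan is to evaluate the integral $\int_0^1 |M(\alpha)|^s\,d\alpha$ via the circle method, splitting $[0,1]$ into major and minor arcs. Since $s > 2$, we cannot simply expand $|M|^s$ as a product; instead, write $s = 2 + (s-2)$ and use Hölder's inequality in the form $\int |M|^s \le \left(\sup_{\alpha \in \mathfrak{m}} |M(\alpha)|\right)^{s-2}\int_0^1 |M(\alpha)|^2\,d\alpha$ on the minor arcs $\mathfrak{m}$, combined with a direct asymptotic evaluation on the major arcs $\mathfrak{M}$. The $L^2$ computation is classical: by Parseval, $\int_0^1 |M(\alpha)|^2\,d\alpha = \sum_{n \le X}\tau_k(n)^2 \asymp X(\log X)^{k^2-1}$, which is a lower-order contribution compared to the main term $X^{s-1}(\log X)^{s(k-1)}$ once $s > 2$. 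The key inputs are therefore (i) a pointwise bound $|M(\alpha)| \ll X^{1/2+\eps}(q(1+X\|\alpha - a/q\|))^{-1/2} + \ldots$ of Vinogradov–type / van der Corput type for $\tau_k$ sums on minor arcs, which for $q$ in a suitable range and $\alpha$ far from such rationals gives $|M(\alpha)| \ll X^{1-\eta}$ for some explicit $\eta$; and (ii) on the major arcs, the standard approximation $M(a/q + \beta) \approx q^{-1}c_q(a)\,T(\beta)$ where $T(\beta) = \sum_{n\le X}\tau_k(n)$-type main term, more precisely $T(\beta) = \int_0^X P_{k-1}(\log t)e(t\beta)\,dt$ with $P_{k-1}$ the degree-$(k-1)$ polynomial coming from the Dirichlet hyperbola method / the residue of $\zeta(w)^k X^w/w$.

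**Next I would** carry out the major-arc analysis carefully enough to extract the full asymptotic expansion in powers of $\log X$. On $\mathfrak{M}(q,a)$, one has $M(a/q+\beta) = q^{-1}\left(\sum_{\chi}\ldots\right)$; the cleanest route is to write $\tau_k(n) = \sum_{d_1\cdots d_k = n}1$ and perform the summation over residue classes to obtain, for the contribution of a single major arc, an expression of the shape $q^{-s}|c_q(a)|^s$ (or the appropriate singular-series factor $A_q(s)$ involving Ramanujan sums) times $\int |T(\beta)|^s\,d\beta$ extended to $\mathbb{R}$, up to error terms controlled by the width of the arcs and the derivative bounds $T'(\beta) \ll X\cdot \min(X, \|\beta\|^{-1})(\log X)^{k-1}$. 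Summing the arithmetic factor over all $q \le Q$ and $a$ gives a convergent singular series $\mathfrak{S}(s) = \sum_q A_q(s)$, which is an Euler product; its convergence for $s > 2$ is what makes the main term finite. The real-variable integral $\int_{\mathbb{R}}|T(\beta)|^s\,d\beta$ with $T(\beta) = \int_0^X P_{k-1}(\log t)e(t\beta)\,dt$ then produces, after the substitution $t = Xu$ and expanding $P_{k-1}(\log X + \log u)$ binomially, precisely the shape $X^{s-1}(\log X)^{s(k-1)}\sum_{\ell \ge 0}\gamma_{\ell,s,k}(\log X)^{-\ell}$, with $\gamma_{0,s,k} = (\text{leading coeff of }P_{k-1})^s\cdot\mathfrak{S}(s)\cdot\int_{\mathbb{R}}|\widehat{\mathbf{1}_{[0,1]}}(\beta)|^s\,d\beta > 0$ since all three factors are positive (the last integral converges exactly because $s > 2$, as $|\widehat{\mathbf{1}_{[0,1]}}(\beta)| \asymp |\beta|^{-1}$). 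The bound $|\gamma_{\ell,s,k}| \ll \exp(O(\ell))$ follows from tracking the binomial coefficients and the Taylor coefficients of $\log(1+u/X)$-type expansions.

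**The optimization of $\delta_{s,k}$** is the delicate bookkeeping step: one must choose the major-arc cutoff $Q$ (say $Q = X^{\theta}$) to balance three competing errors — the minor-arc bound, roughly $(X^{1-\eta(\theta)})^{s-2}X^{1+\eps}(\log X)^{k^2-1}$ where $\eta(\theta)$ degrades as $\theta$ grows; the major-arc truncation error from extending $\int$ over arcs to $\int_{\mathbb{R}}$ and from the tail $q > Q$ of the singular series; and the error in approximating $M$ by its main term on each arc. The stated exponent $\delta_{s,k} = \frac{2(s-2)}{(s+7)(k+1)+2}$ strongly suggests that the governing minor-arc estimate is of the form $|M(\alpha)| \ll X^{1/2+\eps}q^{1/2}(1 + X\|\alpha\|)^{\ldots} + X^{1/2+\eps}q^{(k-1)/(k+1)}\ldots$ or more likely a Weyl-type bound for $\tau_k$ giving savings of order $X^{\theta/(k+1)}$ over the trivial bound when $\alpha$ lies in a minor arc with denominator $\asymp X^{\theta}$; matching powers and solving the linear equation in the exponents produces the denominator $(s+7)(k+1)+2$. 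I expect the \textbf{main obstacle} to be obtaining a minor-arc bound for $M(\alpha) = \sum_{n\le X}\tau_k(n)e(n\alpha)$ that is both strong enough and uniform enough in $q$: unlike $e(n^k\alpha)$, here one exploits the multiplicative structure by a hyperbola-method decomposition $n = d_1\cdots d_k$ followed by a double-sum / bilinear estimate (Cauchy–Schwarz in one variable, then a divisor-bounded large-sieve or van der Corput step), and keeping the $\eps$'s and the dependence on $q$ under control through this decomposition — while still recovering the precise constant in $\delta_{s,k}$ — is where the real work lies.
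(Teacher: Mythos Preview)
Your overall architecture matches the paper's: circle method, minor arcs bounded by $(\sup_{\mathfrak m}|M|)^{s-2}\int_0^1|M|^2$ via Parseval, major arcs evaluated directly, and the archimedean integral expanded into descending powers of $\log X$ to produce the series $\sum_\ell\gamma_{s,k,\ell}(\log X)^{-\ell}$.

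Where you go astray is in locating the $k$-dependence of $\delta_{s,k}$. The minor-arc bound the paper actually uses, obtained from a type~I/II decomposition of $\tau_k=1*\cdots*1$, is
\[
|M(\alpha)|\ll\bigl(\sqrt{qX}+X/\sqrt{q}+X^{4/5}\bigr)(\log X)^{O(1)},
\]
whose exponents are \emph{independent of $k$}; there is no ``Weyl-type saving of order $X^{\theta/(k+1)}$'' available or needed. The factor $(k+1)$ in the denominator of $\delta_{s,k}$ comes instead from the \emph{major-arc approximation error}: Voronoi summation gives
\[
\sum_{n\le X}\tau_k(n)e\Bigl(\tfrac{an}{q}\Bigr)=XP_{k,q}(\log X)+O\bigl(q^{1/2+k/(k+1)}X^{(k-1)/(k+1)+\eps}\bigr),
\]
and after partial summation in $\beta$ and integrating $|M|^s$ over the arcs this produces an error $X^{s-1}P^{9/2+1/(k+1)}X^{-2/(k+1)+\eps}$. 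Balancing this against the minor-arc contribution $X^{s-1-\eta(s-2)/2}$ (with $P=X^\eta$) is exactly what yields the stated $\delta_{s,k}$. You do list the approximation error among your three competing terms, but you then search for the $k$ in the wrong one; pursuing the speculated $k$-dependent minor-arc bound would lead nowhere. A related inaccuracy: the major-arc main term is not $q^{-1}c_q(a)T(\beta)$ but $Q_{k,q}(\log X)v(\beta)$ for a degree-$(k-1)$ polynomial $Q_{k,q}$ that depends on $q$ and \emph{not} on $a$ when $(a,q)=1$; the arithmetic sum is $\sum_{q}\varphi(q)Q_{k,q}(\log X)^s$, not a Ramanujan-sum Euler product.
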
 

We prove this with a straightforward application of the circle method. For such high moments, the contribution near rationals with small denominator 
(the major arcs) dominates. We therefore require bounds for the remaining points (the minor arcs).

The minor arc bounds we use follow from a decomposition of $\tau_k$ into type I and type II sums. Vaughan's identity in the proof of analogous bounds for the von Mangoldt function provides this decomposition, though 
the convolution structure of $\tau_k$ makes the decomposition somewhat more straightforward.
The major arc estimates follow from standard estimates for partial sums of 
$\tau_k(n)\chi(n)$ coming from Voronoi summation (in particular, Theorem 4.16 in \cite{IK}).

In the course of dealing with the main term, we use a result on the order of magnitude of higher moments of Dirichlet kernels, which we state here. A proof of this will be given in a later section. Here, we write 
\[v(\beta) = \sum_{n\le X} e(n\beta).\]
\begin{proposition}
We have that for $s > 2$
\[\int_0^1 |v(\beta)|^s d\beta = A_sX^{s - 1} + O\left(X^{s - 2}\right).\]
where
\[A_s = \frac{2}{\pi}\int_0^\infty \frac{|\sin t|^s}{t^s}dt.\]
\label{prop:dir_ker_moment}
\end{proposition}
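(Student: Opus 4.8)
The plan is to use the classical closed form for the Dirichlet kernel together with a rescaling that concentrates the integral near the integers. Put $N = \lfloor X\rfloor$, so $v(\beta) = \sum_{n=1}^N e(n\beta)$ and, for non-integer $\beta$, $|v(\beta)| = |\sin(\pi N\beta)|/|\sin(\pi\beta)|$. Since $|v|$ has period $1$ and is even, $\int_0^1 |v(\beta)|^s\,d\beta = 2\int_0^{1/2}\frac{|\sin\pi N\beta|^s}{|\sin\pi\beta|^s}\,d\beta$, and it suffices to evaluate the latter.

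First I would replace $\sin(\pi\beta)$ by $\pi\beta$ in the denominator. On $(0,1/2]$ one has $\sin(\pi\beta) = \pi\beta\, g(\beta)$ with $g(\beta) = \sin(\pi\beta)/(\pi\beta) \in [2/\pi, 1]$ and $g(\beta) = 1 + O(\beta^2)$, so $\bigl|(\sin\pi\beta)^{-s} - (\pi\beta)^{-s}\bigr| \ll \beta^{2-s}$ there. Using $|\sin\pi N\beta|^s \le \min\bigl(1,(\pi N\beta)^s\bigr)$, the error in this replacement is
\[
\ll \int_0^{1/2}\min\bigl(1,(N\beta)^s\bigr)\,\beta^{2-s}\,d\beta,
\]
and splitting at $\beta = 1/N$ bounds this by $\ll N^{s-3} + \log N + 1$, the first, second, or third term dominating according as $s > 3$, $s = 3$, or $2 < s < 3$; in all cases, since $s > 2$, this is $O(X^{s-2})$. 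Hence
\[
\int_0^1 |v(\beta)|^s\,d\beta = 2\int_0^{1/2}\frac{|\sin\pi N\beta|^s}{(\pi\beta)^s}\,d\beta + O(X^{s-2}).
\]

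The substitution $t = \pi N\beta$ turns the remaining integral into $\frac{2N^{s-1}}{\pi}\int_0^{\pi N/2}\frac{|\sin t|^s}{t^s}\,dt$. Since $|\sin t|^s/t^s \to 1$ as $t\to 0^+$ and is $O(t^{-s})$ with $s > 1$ as $t\to\infty$, the integral $\int_0^\infty |\sin t|^s t^{-s}\,dt$ converges, and completing the range of integration costs $\frac{2N^{s-1}}{\pi}\cdot O(N^{1-s}) = O(1)$. By the definition of $A_s$, the remaining integral equals $A_s N^{s-1} + O(1)$. Finally, because $s > 2$, $N^{s-1} = (X + O(1))^{s-1} = X^{s-1} + O(X^{s-2})$, and combining the last two displays gives $\int_0^1 |v(\beta)|^s\,d\beta = A_s X^{s-1} + O(X^{s-2})$.

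No step here presents a genuine obstacle; the computation is routine, if a little delicate. The one point requiring care is the uniformity of the error term as $s$ varies, since the integral $\int_0^{1/2}\min(1,(N\beta)^s)\,\beta^{2-s}\,d\beta$ behaves differently for $s$ below, at, and above $3$; one checks the bound $O(X^{s-2})$ in each of those three ranges, and the hypothesis $s>2$ is precisely what is needed to absorb the $O(1)$ contributions into the error term.
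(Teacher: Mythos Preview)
Your argument is correct and essentially identical to the paper's: reduce to $[0,1/2]$, replace $\sin(\pi\beta)$ by $\pi\beta$ using $g(\beta)=1+O(\beta^2)$ and bound the resulting error $\int_0^{1/2}\min(1,(N\beta)^s)\beta^{2-s}\,d\beta$ by splitting at $1/N$, then substitute $t=\pi N\beta$ and extend the range to $\infty$. The only cosmetic difference is that the paper writes $\sin(\pi(X+1)\beta)/\sin(\pi\beta)$ and you write $\sin(\pi N\beta)/\sin(\pi\beta)$, and you spell out the three regimes $s\lessgtr 3$ where the paper simply asserts the bound $\ll X^{s-2}$.
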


Our methods likely generalize straightforwardly to the case of $\chi_1 * \dots *\chi_k$ for some fixed Dirichlet characters $\chi_1,\dots,\chi_k$, and yield a similar result. The case of Fourier coefficients of $\GL(k)$ 
cusp forms is quite distinct however, since it is expected, and was shown by Jutila \cite{J} for some of the $\GL(2)$ case, that the relevant exponential sum is small everywhere. Consequently, the bulk of the contribution
should not be expected to come from the major arcs, so the method used here fails.

We have not taken much care to optimize the sizes of the error terms. In particular, the error terms in Proposition \ref{prop:maj_arc1} can likely be improved quite cheaply. However, an error term
qualitatively superior to $\delta_{s,k}\ll_s\frac 1k$ is likely quite hard to breach.

\subsection{Notation and conventions}
$X$ is some sufficiently large real number that should be thought of as going to $\infty$, and $\eps > 0$ is some sufficiently small constant. $s > 2$ is a fixed real number, and $k\ge 2$ is some fixed integer.
As usual, we use the notation $A\ll B\iff A\le O(B)\iff B\gg A$ to denote that $|A|\le CB$ for some absolute constant $C$. In any instance, this implied constant may depend on $s, k, \eps$, and any further parameters 
on which it may depend will be noted in a subscript. We write $a\sim A$ to denote that $A < a\le 2A$, and $a\asymp A$ to denote that $A\ll a\ll A$. 
\section{Setup}

Take $P = X^\eta$, with $\eta = \frac{2}{s - 2}\delta_{s, k}$.
It is easy to see that we have the bound $\eta\le\frac{2}{5}$.
$\eta$ also has the property that $\frac{2}{k + 1} - \bigg(\frac{9}{2} + \frac{1}{k + 1}\bigg)\eta = \frac{1}{2}\eta(s - 2)$. The significance of this will become clear later on when we are collecting
various error terms.
Also, let $\mf M$ be the union of 
\[ \mf M(q, a) = \{\alpha\in [0, 1] : |\alpha - a/q|\le PX^{-1}\}\] 
for $q\le P, (a, q) = 1$, and $\mf m = [0, 1]\setminus\mf M$. Note that for large $X$, all the $\mf M(q, a)$ are disjoint.
It is easy to see by Dirichlet's approximation theorem that for all $\alpha\in\mf m$, there exist $P < q\le X/P, (a, q) = 1$ so that 
$|\alpha - a/q|\le q^{-2}$. Then, the main result follows if we can prove the following two estimates for the contribution of the major and minor arcs.
\begin{proposition}
We have that 
\[\int_{\mf M} |M(\alpha)|^sd\alpha = X^{s -  1}(\log X)^{s(k - 1)}\sum_{\ell\ge 0}\frac{\gamma_{\ell, s, k}}{(\log X)^\ell} 
+ O(X^{s - 1 - \delta_{s, k} + \eps})\]
where $\gamma_{s, k, \ell},\delta_{s, k}$ are as in the statement of Theorem \ref{thm:main_result}.
\label{prop:MT_est}
\end{proposition}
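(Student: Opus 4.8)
The plan is a standard major-arc analysis, the one non-standard feature being that the ``local density'' of $\tau_k$ is a polynomial in $\log$ of degree $k-1$ rather than a constant, which is precisely what produces the secondary main terms $\gamma_{\ell,s,k}(\log X)^{s(k-1)-\ell}$. I dissect $\mf M$ into the disjoint arcs $\mf M(q,a)$, $q\le P$, and on $\mf M(q,a)$ write $\alpha=a/q+\beta$ with $|\beta|\le PX^{-1}$. To produce a main term for $M(a/q+\beta)=\sum_{n\le X}\tau_k(n)e(na/q)e(n\beta)$ I first handle the twisted partial sum $T_{q,a}(t)=\sum_{n\le t}\tau_k(n)e(na/q)$: expanding $e(na/q)$ into Dirichlet characters to the divisors of $q$ (pulling out the common factor to deal with the terms $(n,q)>1$) expresses $T_{q,a}(t)$ as a short linear combination of sums $\sum_{m\le y}\tau_k(m)\chi(m)$, and Voronoi summation for $\tau_k$ (Theorem~4.16 of \cite{IK}) evaluates each of these: only the principal characters give a main term, equal to the residue at $w=1$ of $L(w,\chi_0)^k y^w/w$, namely $y$ times a polynomial in $\log y$ of degree $k-1$, and the remaining error is $O(y^{1-2/(k+1)+\eps})$ with polynomial dependence on the modulus. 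Collecting terms gives $T_{q,a}(t)=F_{q,a}(t)+R_{q,a}(t)$ with $F_{q,a}(t)=t\,G_{q,a}(\log t)$, $\deg G_{q,a}=k-1$, all coefficients $\ll q^{-1+\eps}$, leading coefficient $\mathfrak c_k(q,a)/(k-1)!$ (with $\mathfrak c_k(1,1)=1$), and $R_{q,a}(t)\ll q^{O(1)}t^{1-2/(k+1)+\eps}$; partial summation then gives $M(a/q+\beta)=N_{q,a}(\beta)+E_{q,a}(\beta)$ with $N_{q,a}(\beta)=\int_0^X e(t\beta)\,dF_{q,a}(t)$ and $|E_{q,a}(\beta)|\ll q^{O(1)}X^{1-2/(k+1)+\eps}(1+|\beta|X)$.

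Next I extract the main term. Using $|M|^s=|N_{q,a}|^s+O\big(|E_{q,a}|(|N_{q,a}|^{s-1}+|E_{q,a}|^{s-1})\big)$ (valid since $s>1$), I reduce to $\int_{\mf M(q,a)}|N_{q,a}(\beta)|^s\,d\beta$. Substituting $\beta=\gamma/X$ and expanding $G_{q,a}(\log X+\log u)$ in powers of $\log u$ gives the exact identity $X^{-1}N_{q,a}(\gamma/X)=\sum_{m=0}^{k-1}L_{q,m}(\log X)I_m(\gamma)$, where $I_m(\gamma)=\int_0^1 e(u\gamma)(\log u)^m\,du$, each $L_{q,m}$ is a polynomial in $\log X$ of degree $k-1-m$ with $L_{q,m}\ll q^{-1+\eps}(\log X)^{k-1-m}$, and $L_{q,0}$ has leading term $\mathfrak c_k(q,a)(\log X)^{k-1}/(k-1)!$. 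Since $I_m(\gamma)\ll(1+|\gamma|)^{-1}(\log(2+|\gamma|))^m$, each $|I_m|^s$ is integrable over the whole real line, so after multiplying out $|\cdot|^s$ the $\gamma$-range may be extended from $|\gamma|\le P$ to $(-\infty,\infty)$ at a cost that is a negligible power of $X$ below $X^{s-1}$. Factoring the dominant term $\mathfrak c_k(q,a)(\log X)^{k-1}I_0(\gamma)/(k-1)!$ out of $\sum_m L_{q,m}(\log X)I_m(\gamma)$ and expanding $|1+z|^s=\sum_{j\ge0}\binom{s/2}{j}(z+\bar z+|z|^2)^j$ --- splitting off the thin set where the ratio $z$ of the remaining terms to $I_0(\gamma)$ exceeds a fixed small constant, where one estimates crudely using $s>1$ --- turns $\int_{-\infty}^{\infty}|X^{-1}N_{q,a}(\gamma/X)|^s\,d\gamma$ into $|\mathfrak c_k(q,a)|^s(\log X)^{s(k-1)}$ times an asymptotic series in $1/\log X$ with bounded coefficients, whose leading term is $((k-1)!)^{-s}\int_{-\infty}^{\infty}|I_0(\gamma)|^s\,d\gamma$; and $\int_{-\infty}^{\infty}|I_0(\gamma)|^s\,d\gamma=A_s$ of Proposition~\ref{prop:dir_ker_moment} (alternatively, comparing the continuous kernel with $v(\beta)$ at admissible cost, one may quote that proposition directly for $\int_0^1|v(\beta)|^s\,d\beta$). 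Summing over $q\le P$, the arithmetic weight contributes $\sum_{q\le P}\sum_{(a,q)=1}|\mathfrak c_k(q,a)|^s=\mathfrak S_s+O(P^{-(s-2)+\eps})$, and $\mathfrak S_s$, and the analogous series in the lower-order coefficients, converge \emph{precisely because $s>2$}: one computes $\mathfrak c_k(p,a)=1-(1-1/p)^{k-1}$ for $(a,p)=1$, whence $\sum_{(a,p)=1}|\mathfrak c_k(p,a)|^s\asymp p^{1-s}$. Assembling everything yields the stated expansion with $\gamma_{s,k,0}=A_s\mathfrak S_s/((k-1)!)^s>0$, and $|\gamma_{s,k,\ell}|\ll\exp(O(\ell))$ follows from $|\binom{s/2}{j}|\ll C^j$ together with the number of terms generated at level $\ell$.

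It remains to bound $\sum_{q\le P}\sum_{(a,q)=1}\int_{|\beta|\le PX^{-1}}\big(|E_{q,a}|^s+|E_{q,a}|\,|N_{q,a}|^{s-1}\big)\,d\beta$. Using $|E_{q,a}(\beta)|\ll q^{O(1)}X^{1-2/(k+1)+\eps}P$ on $\mf M(q,a)$, the bound $|N_{q,a}(\beta)|\ll q^{-1+\eps}(\log X)^{O(1)}\min(X,|\beta|^{-1})$, the substitution $\beta=\gamma/X$ (the resulting $\gamma$-integrals converging because $s>2$ balances the growth of $|E_{q,a}|$ against the decay of $|N_{q,a}|^{s-1}$), and $\sum_{q\le P}\phi(q)q^{O(1)}\ll P^{O(1)}$, one finds --- after also disposing of the tail of the $\gamma$-integral and the singular-series tail $P^{-(s-2)+\eps}$ --- that the total error is $\ll X^{s-1-[2/(k+1)-(9/2+1/(k+1))\eta]+\eps}$. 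The identity $2/(k+1)-(9/2+1/(k+1))\eta=\tfrac{1}{2}\eta(s-2)=\delta_{s,k}$ recorded in Section~2 says exactly that this is $X^{s-1-\delta_{s,k}+\eps}$.

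\textbf{Main obstacle.} The delicate point is the error accounting in the last step together with the uniformity in the modulus of the Voronoi error term of \cite[Theorem~4.16]{IK}: one must keep track of that modulus dependence and organise the sum over the $\asymp P^2$ arcs so that the accumulated error does not exceed $X^{s-1-\delta_{s,k}+\eps}$ --- this is what limits how large $\eta$ may be taken, and hence fixes the value of $\delta_{s,k}$. The most calculation-heavy point is the clean extraction of the full asymptotic expansion in $1/\log X$ in the second step, where all $k$ powers of $\log$ must be carried through the non-integral $s$-th power; that is where the combinatorics behind $|\gamma_{s,k,\ell}|\ll\exp(O(\ell))$ lives.
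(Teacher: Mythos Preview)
Your approach is correct and shares the paper's overall skeleton---Voronoi for the twisted partial sums, partial summation to bring in $\beta$, expansion of the $s$-th power, and summation over $q\le P$ with the singular-series tail controlled by $s>2$---but you organise the main-term extraction differently. The paper invokes Corollary~\ref{cor:maj_arc_est} to replace $M(a/q+\beta)$ by $Q_{k,q}(\log X)\,v(\beta)$, a \emph{single} polynomial in $\log X$ times the Dirichlet kernel; this decouples the $\beta$-integral from the $\log X$-expansion, so that $\int|v(\beta)|^s\,d\beta$ is literally Proposition~\ref{prop:dir_ker_moment} and $Q_{k,q}(\log X)^s$ is expanded by the ordinary real binomial series in one variable. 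You instead keep the coupled main term $N_{q,a}(\beta)=\int_0^X e(t\beta)\,d[t\,G_{q,a}(\log t)]$, which forces the introduction of the auxiliary integrals $I_m(\gamma)=\int_0^1 e(u\gamma)(\log u)^m\,du$ and the expansion $|1+z|^s=(1+z+\bar z+|z|^2)^{s/2}$ with $\gamma$-dependent $z$. Both routes land on the same asymptotic expansion and the same error exponent; the paper's decoupling buys substantially lighter bookkeeping, while your route is more self-contained in that it sidesteps the approximation underlying Corollary~\ref{cor:maj_arc_est}.
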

\begin{proposition}
We have the bound
\[\int_{\mf m} |M(\alpha)|^sd\alpha\ll X^{s - 1 - \delta_{s, k}}(\log X)^{O(1)}.\]
\label{prop:min_arc_bound}
\end{proposition}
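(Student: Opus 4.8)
The plan is to estimate $M$ pointwise on $\mf m$ and interpolate against the second moment: since $s>2$,
\[\int_{\mf m}|M(\alpha)|^s\,d\alpha\le\Big(\sup_{\alpha\in\mf m}|M(\alpha)|\Big)^{s-2}\int_0^1|M(\alpha)|^2\,d\alpha,\]
and by Parseval $\int_0^1|M|^2=\sum_{n\le X}\tau_k(n)^2\ll X(\log X)^{k^2-1}$. As $\delta_{s,k}=\tfrac{s-2}{2}\eta$, it therefore suffices to prove the minor-arc bound $\sup_{\alpha\in\mf m}|M(\alpha)|\ll X^{1-\eta/2}(\log X)^{O(1)}$. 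Fix $\alpha\in\mf m$ and, by Dirichlet's theorem, a reduced fraction $a/q$ with $P<q\le X/P$ and $|\alpha-a/q|\le q^{-2}$; note that then $q\le X^{1-\eta}$ and $X/q<X^{1-\eta}$.

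Write $M(\alpha)=\sum_{d_1\cdots d_k\le X}e(d_1\cdots d_k\alpha)$ and decompose dyadically into $O((\log X)^k)$ pieces $S_{\vec D}(\alpha)$, one for each box $d_i\sim D_i$ with $\prod_iD_i\le X$. Boxes with $\prod_iD_i\le X^{1-\eta}$ are bounded trivially by $\prod_iD_i\le X^{1-\eta}$. For the rest, order $D_1\ge\cdots\ge D_k$. If $D_1>X^{1-\eta}$ (the type I case) then $\prod_{i\ge2}D_i\le X/D_1<X^\eta=P$, so $d_2,\dots,d_k$ run over a short range. If $D_1\le X^{1-\eta}$ (the type II case), then since $\eta<\tfrac1{k+1}$ we have $D_1\ge(\prod_iD_i)^{1/k}>X^{(1-\eta)/k}\ge X^\eta$, and setting $m=d_1$, $n=d_2\cdots d_k$ writes $S_{\vec D}$ as a bilinear sum $\sum_{m,n}a_mb_ne(mn\alpha)$ in which $m$ runs over an interval of length $U:=D_1$, $b_n\le\tau_{k-1}(n)$ is supported on an interval of length $V\asymp\prod_{i\ge2}D_i$, $U,V\in(X^\eta,X^{1-\eta}]$, and $UV\ll X$. (This is the only place where $\eta<\tfrac1{k+1}$, and hence the factor $k+1$ in $\delta_{s,k}$, is needed.) The cut-off $mn\le X$ merely restricts the inner summations to subintervals and is harmless.

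For a type II box, Cauchy--Schwarz in $m$, expansion of the square, and the standard bound $\sum_{|\ell|\le V}\min(U,\|\ell\alpha\|^{-1})\ll(1+V/q)(U+q\log q)$ give $|S_{\vec D}(\alpha)|^2\ll(\sum_m|a_m|^2)(\sum_n|b_n|^2)(1+V/q)(U+q\log q)$; as $\sum_m|a_m|^2\ll U$ and $\sum_n|b_n|^2\ll V(\log X)^{O(1)}$ and $UV\ll X$, this is $\ll X(\log X)^{O(1)}(U+V+q+X/q)\ll X^{2-\eta}(\log X)^{O(1)}$, so each type II box contributes $\ll X^{1-\eta/2}(\log X)^{O(1)}$. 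For a type I box, summing first over $d_1$, which ranges over a subinterval of $(D_1,2D_1]$, yields $|S_{\vec D}(\alpha)|\ll\sum_{r<2^{k-1}P}\tau_{k-1}(r)\min(X/r,\|r\alpha\|^{-1})$ with $r=d_2\cdots d_k$. For all but $O(1)$ values of $r$ one has $q\nmid r$ and $\|r\alpha\|\gg\|ra/q\|=c_r/q$ with $c_r\ge1$ an integer, and since $q>P$ the map $r\mapsto c_r$ is $O(1)$-to-one on $r<2^{k-1}P$; so Cauchy--Schwarz, using $\sum_rc_r^{-2}\ll1$ and $\sum_{r<2^{k-1}P}\tau_{k-1}(r)^2\ll P(\log X)^{O(1)}$, bounds this main part by $\ll q\,P^{1/2}(\log X)^{O(1)}\le X^{1-\eta/2}(\log X)^{O(1)}$. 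The $O(1)$ exceptional $r$ (those with $q\mid r$ or with $c_r$ bounded) satisfy $\min(X/r,\|r\alpha\|^{-1})<X/P$ — either from $X/r<X/P$ when $r>P$, or, when $r\le P$, because $\alpha\in\mf m$ keeps $\alpha$ far from the rational $m/r$ — and so contribute $\ll X^{1-\eta}\max_r\tau_{k-1}(r)=X^{1-\eta+o(1)}$, which is absorbed. Summing over all $O((\log X)^k)$ boxes gives the required $L^\infty$ bound.

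The step I expect to be most delicate is the type I estimate: one must handle the higher divisor weight $\tau_{k-1}$ so as to incur only polylogarithmic losses in the main term, while confirming that the finitely many exceptional terms near multiples of $q$ stay below $X^{1-\eta/2}$; this is why the argument above treats the type I sum directly rather than invoking a general type I lemma.
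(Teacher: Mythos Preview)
Your proof is correct and uses the paper's overall strategy---bound $|M|$ pointwise on $\mf m$ and interpolate against the $L^2$ norm via Parseval---but you rederive the pointwise minor-arc estimate inline rather than invoking the paper's separate Proposition~\ref{prop:min_arc_est}. The paper first proves the uniform bound $|M(\alpha)|\ll(\sqrt{qX}+X/\sqrt q+X^{4/5})(\log X)^{O(1)}$ for any $\alpha$ with approximation $a/q$, using a fixed type~I/II threshold at $X^{1/5}$ and citing the standard type~I and type~II lemmas (Lemmas~13.7--13.8 of Iwaniec--Kowalski); it then specializes to $P<q\le X/P$, needing only $\eta\le\tfrac25$. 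You instead set the type~I/II threshold at $X^{1-\eta}$, use $\eta<\tfrac1{k+1}$ to force both bilinear ranges below $X^{1-\eta}$, and handle the type~I sum directly via Cauchy--Schwarz against $\sum_r c_r^{-2}$, disposing of the $O(1)$ residues near multiples of $q$ through the definition of $\mf m$. Your route is more self-contained and avoids the extraneous $X^{4/5}$ term, at the cost of being tailored to the specific minor-arc range; the paper's is more modular, with a pointwise bound of independent interest. One harmless slip: the claim $V>X^\eta$ in the type~II case is not actually established (e.g.\ take $D_1$ just below $X^{1-\eta}$ and all other $D_i$ bounded), but you only ever use $U,V\le X^{1-\eta}$, which does follow from $D_1\ge X^{(1-\eta)/k}\ge X^\eta$, so the bound is unaffected.
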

\begin{proof}
This follows immediately from Proposition \ref{prop:min_arc_est}, whose proof we defer to the last section, and Parseval. Indeed, note that since $\eta\le 2/5$, it follows from Proposition \ref{prop:min_arc_est}
that $\sup_{\alpha\in\mf m} |M(\alpha)|\ll X^{1 - \eta / 2}(\log X)^{O(1)}$, and therefore
\begin{align*}
\int_{\mf m} |M(\alpha)|^sd\alpha&\ll \bigg(\sup_{\alpha\in\mf m} |M(\alpha)|\bigg)^{s - 2}\int_0^1 |M(\alpha)|^2d\alpha\\ 
&\ll (X^{1 - \eta/2})^{s - 2}X(\log X)^{O(1)}\ll X^{s - 1 - \frac{1}{2}\eta(s - 2)}(\log X)^{O(1)}.
\end{align*}
The proposition follows upon noting that $\frac{1}{2}\eta(s - 2) = \delta_{s, k}$.
\end{proof}

In the next section, we shall prove Propositions \ref{prop:MT_est}, \ref{prop:min_arc_est}.
The main theorem clearly follows from Propositions \ref{prop:MT_est} and \ref{prop:min_arc_bound}.

\section{Major arc estimates for higher divisor functions}
Our main major arc estimate is the following. 
\begin{proposition}
Suppose that $q\ge 1, (a, q) = 1$. Then, we have that
\[\sum_{n\le X} \tau_k(n)e\left(\frac{an}{q}\right) = XP_{k, q}(\log X) + O(q^{\frac{1}{2} + \frac{k}{k + 1}}X^{\frac{k - 1}{k + 1}}(qX)^{\eps})\]
where $P_{k, q}(\log X)$ is a polynomial of degree $k - 1$ in $\log X$ with coefficients of size $\ll \tau_2(q)^{O(1)}/q$. In addition, the coefficient of $(\log X)^{k - 1}$ is nonnegative and $\gg 1/q$.
\label{prop:maj_arc1}
\end{proposition}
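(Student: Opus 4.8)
The plan is to estimate $\sum_{n\le X}\tau_k(n)e(an/q)$ by sorting $n$ according to its residue class $n\equiv b\pmod q$ and detecting each class with multiplicative (Dirichlet) characters, so that the exponential sum decomposes into a main contribution from the principal character and error contributions from non-principal characters. Concretely, write $e(an/q) = \sum_{b\bmod q}\frac{1}{q}\sum_{c\bmod q}e(ac/q)\mathbf{1}_{n\equiv b}$-type identities, or more cleanly use Gauss sums: for $(n,q)$ arbitrary one expands $e(an/q)$ in terms of $\chi(n)$ for $\chi$ running over characters mod $q$ (handling the common-divisor issue by first pulling out $d=(n,q)$ and reducing to the coprime case). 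The upshot is
\[
\sum_{n\le X}\tau_k(n)e\left(\frac{an}{q}\right) = \sum_{d\mid q}(\text{something of size }\ll \tau(q)^{O(1)})\sum_{\chi\bmod q/d}\frac{\tau_\chi(a\cdot)}{\phi(q/d)}\sum_{m\le X/d}\tau_k(dm)\chi(m) + \dots,
\]
and the main point is that each inner sum $\sum_{m\le Y}\tau_k(m)\chi(m)$ can be controlled by Voronoi-type / contour-integration estimates.

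Next I would invoke the standard asymptotics for $\sum_{n\le Y}\tau_k(n)\chi(n)$. For $\chi$ principal this is the classical divisor problem: $\sum_{n\le Y}\tau_k(n) = Y Q_{k-1}(\log Y) + O(Y^{(k-1)/(k+1)+\eps})$ where $Q_{k-1}$ has degree $k-1$ and positive leading coefficient $1/(k-1)!$; twisting by the principal character mod $q$ only multiplies the main term by the singular-series-type factor $\prod_{p\mid q}(1-1/p)^{?}$ and contributes the claimed $\tau_2(q)^{O(1)}/q$ size (after dividing by $q$ from the character-detection), with an error still of shape $Y^{(k-1)/(k+1)+\eps}$ times divisor factors. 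For $\chi$ non-principal, Theorem 4.16 of \cite{IK} (functional equation / approximate functional equation for $L(s,\chi)^k$, or equivalently the $\GL(1)$ Voronoi summation applied $k$ times) gives $\sum_{n\le Y}\tau_k(n)\chi(n) \ll (q Y)^{\eps} q^{?} Y^{(k-1)/(k+1)}$ with \emph{no} main term, because $L(s,\chi)^k$ is entire. Summing the non-principal contributions over all $\chi\bmod q$ and over $d\mid q$, bounding Gauss sums by $q^{1/2}$, and keeping track of the $X/d$ versus $X$ scaling, produces a total error of the shape $q^{1/2+k/(k+1)}X^{(k-1)/(k+1)}(qX)^\eps$ — matching the stated bound (the $q^{1/2}$ is the Gauss sum, and the $q^{k/(k+1)}$ comes from summing $\phi(q)$ characters each contributing an error with a $q$-power from the conductor in the functional equation, optimized against the length).

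The polynomial $P_{k,q}$ is then read off as $\frac{1}{q}\sum_{d\mid q}(\dots)Q_{k-1}(\log(X/d))$ coming only from the principal-character term; re-expanding $\log(X/d) = \log X - \log d$ shows $P_{k,q}$ is a degree-$(k-1)$ polynomial in $\log X$ whose coefficients are finite sums over $d\mid q$ of bounded quantities divided by $q$, hence $\ll \tau_2(q)^{O(1)}/q$. For the leading coefficient: only $\log^{k-1} X$ terms survive the re-expansion with total coefficient $\frac{1}{(k-1)!}\cdot\frac{1}{q}\sum_{d\mid q}(\text{multiplicative density factor})$, which is a positive multiplicative function of $q$ bounded below by a constant times $1/q$ (each local factor at $p\mid q$ is $\ge$ some positive constant like $(1-1/p)^{k}$ times $p$-power corrections, and in particular is $\gg 1/q$ overall since the worst case is $q$ prime).

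The main obstacle I anticipate is \emph{not} the principal term but the bookkeeping of the non-principal contributions: one must make sure that the error from Theorem 4.16 of \cite{IK}, which is stated for a fixed primitive character of conductor $q$, is applied to the primitive character inducing $\chi$ (conductor possibly much smaller than $q$) and that the passage from $\chi$ to its primitive inductor, together with the removal of Euler factors at primes dividing $q$ but not the conductor, does not destroy the uniformity in $q$; and that after summing over all $d\mid q$ and all $\chi$ the $q$-exponent collapses exactly to $\frac12 + \frac{k}{k+1}$ rather than something larger. A secondary, more routine nuisance is the case $(n,q)>1$, i.e. correctly handling $\tau_k(dm)$ versus $\tau_k(d)\tau_k(m)$ — since $\tau_k$ is only multiplicative, not completely multiplicative, one sums over $d\mid q^\infty$ with $\tau_k(d)\ll d^\eps$ and absorbs this into the $(qX)^\eps$ and $\tau_2(q)^{O(1)}$ factors.
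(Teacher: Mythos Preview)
Your proposal is correct and is essentially the same approach the paper invokes: the paper's proof consists only of the sentence ``This follows from the method in the proof of Proposition 4.2 in \cite{MRT}, though we may use Theorem 4.16 in \cite{IK} to achieve the above error terms,'' and what you have written is precisely an unpacking of that method --- decompose $e(an/q)$ via Dirichlet characters (handling $(n,q)>1$ by pulling out divisors), extract the main term from the principal character, and bound the non-principal contributions using the Voronoi/functional-equation input of Theorem 4.16 in \cite{IK}. The bookkeeping concerns you flag (imprimitive characters, collapsing the $q$-exponent to $\tfrac12+\tfrac{k}{k+1}$, the non-complete-multiplicativity of $\tau_k$) are exactly the routine details that the MRT argument handles, so there is no divergence in strategy.
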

\begin{proof}
This follows from the method in the proof of Proposition 4.2 in \cite{MRT}, though we may use Theorem 4.16 in \cite{IK} to achieve the above error terms.
\end{proof}
From partial summation, we then obtain the following.
\begin{corollary}
Suppose that $q\ge 1, (a, q) = 1, |\beta|\le 1$. Then, we have that 
\[\sum_{n\le X}\tau_k(n)e\left(\frac{an}{q} + n\beta\right) = Q_{k, q}(\log X)v(\beta) + O((1 + |\beta|X)q^{\frac{1}{2} + \frac{k}{k + 1}}X^{\frac{k - 1}{k + 1}}(qX)^{\eps})\]
\label{cor:maj_arc_est}
where $Q_{k, q}(\log X)$ is a polynomial of degree $k - 1$ in $\log X$ with coefficients of size $\ll \tau_2(q)^{O(1)}/q$. In addition, the leading coefficient is nonnegative and $\gg 1/q$.
\end{corollary}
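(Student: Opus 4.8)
The plan is to deduce Corollary~\ref{cor:maj_arc_est} from Proposition~\ref{prop:maj_arc1} by partial summation, treating $e(n\beta)$ as the smooth weight. Set $S(t) = \sum_{n\le t}\tau_k(n)e(an/q)$, so that by Proposition~\ref{prop:maj_arc1} we have $S(t) = tP_{k,q}(\log t) + E(t)$ with $E(t)\ll q^{1/2 + k/(k+1)}t^{(k-1)/(k+1)}(qt)^\eps$ uniformly for $t\le X$. Then
\[
\sum_{n\le X}\tau_k(n)e\!\left(\tfrac{an}{q} + n\beta\right) = \int_{1^-}^X e(\beta t)\,dS(t) = S(X)e(\beta X) - 2\pi i\beta\int_1^X S(t)e(\beta t)\,dt.
\]
Substituting $S(t) = tP_{k,q}(\log t) + E(t)$ splits this into a main term coming from $tP_{k,q}(\log t)$ and an error term coming from $E(t)$.

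For the main term, I would show that $\int_{1^-}^X e(\beta t)\,d\!\left(tP_{k,q}(\log t)\right) = Q_{k,q}(\log X)v(\beta) + (\text{admissible error})$, where $Q_{k,q}$ is a polynomial of degree $k-1$ whose leading coefficient matches that of $P_{k,q}$ up to lower-order terms, hence is $\gg 1/q$ and nonnegative, and whose coefficients are all $\ll \tau_2(q)^{O(1)}/q$. The cleanest route is to note that $v(\beta) = \sum_{n\le X}e(n\beta) = \int_{1^-}^X e(\beta t)\,d\lfloor t\rfloor$, and that replacing $d\lfloor t\rfloor$ by $dt$ and then by $d(tP_{k,q}(\log t))$ introduces only the discrepancy between $\lfloor t\rfloor$ and $t$ (bounded, contributing $O((1+|\beta|X))$ after integration by parts) and the derivatives of $\log$-powers (which lower the power of $\log X$ but are themselves $\ll \tau_2(q)^{O(1)}/q$ and gain no power of $X$, so they are absorbed into the polynomial $Q_{k,q}$ and a term $\ll (1+|\beta|X)\tau_2(q)^{O(1)}/q$). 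One must be slightly careful that $P_{k,q}(\log t)$ behaves badly for very small $t$, but the range $t\le q^{O(1)}$ or so contributes negligibly compared to the stated error, so one can start the relevant integrals from a fixed small power of $X$ (or simply absorb $t\le 1$ trivially).

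For the error term, the contribution of $E(t)$ is $E(X)e(\beta X) - 2\pi i\beta\int_1^X E(t)e(\beta t)\,dt$, which is $\ll |E(X)| + |\beta|\int_1^X|E(t)|\,dt \ll (1 + |\beta|X)q^{1/2 + k/(k+1)}X^{(k-1)/(k+1)}(qX)^\eps$, since $|E(t)|$ is increasing in $t$ up to the $\eps$-power and $\int_1^X t^{(k-1)/(k+1)}\,dt \ll X^{(k-1)/(k+1)+1} = X\cdot X^{(k-1)/(k+1)}$, the extra factor $X$ being matched by $|\beta|X$ when one uses $|\beta|\int_1^X|E(t)|\,dt \ll |\beta| X\cdot |E(X)|$. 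This is exactly the claimed error after using $|\beta|\le 1$ to bound $(qX)^\eps$-type factors. The only real bookkeeping obstacle — and it is minor — is keeping track of how the differentiations of $(\log t)^j$ redistribute mass among the coefficients of $Q_{k,q}$ while verifying that the leading coefficient is genuinely inherited from $P_{k,q}$ (so that the positivity and lower bound $\gg 1/q$ survive) and that no coefficient exceeds $\tau_2(q)^{O(1)}/q$ in size; everything else is a routine partial summation. I expect no substantive difficulty.
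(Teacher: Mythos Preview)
Your approach is exactly what the paper does: the paper's entire proof of this corollary is the single sentence ``From partial summation, we then obtain the following,'' and your proposal carries out precisely that partial summation from Proposition~\ref{prop:maj_arc1}, with the expected splitting into the main term from $tP_{k,q}(\log t)$ and the error from $E(t)$. Your treatment of the error contribution and your identification of the only delicate point (tracking how the $(\log t)^j$ derivatives feed into the coefficients of $Q_{k,q}$ while preserving the leading coefficient) are both in line with what the one-line justification in the paper leaves implicit.
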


Before we start dealing with the main term, we shall prove Proposition \ref{prop:dir_ker_moment}.
\begin{proof}[Proof of Proposition \ref{prop:dir_ker_moment}]
Our proof here uses the method in a MathStackExchange post of daniel-fischer \cite{DF}, though we take some care here to track the error terms.

We shall suppose for simplicity that $X$ is an integer, as it can be easily checked that adjusting $X$ by $O(1)$ does not alter the main term on the RHS by a quantity that can't be absorbed into the 
error term. 

It is well-known then that $v(\beta) = \frac{\sin(\pi (X + 1)\beta)}{\sin(\pi\beta)}$. Taylor expanding $\frac{\pi\beta}{\sin\pi\beta}$, we have that for $\beta\in [0, 1/2]$
\[\frac{\pi\beta}{\sin\pi\beta} = 1 + O(\beta^2)\]
and it can be easily checked that $|\pi\beta/\sin(\pi\beta) - 1|\le 3/4$. Therefore, we have that
\[\bigg(1 + \bigg(\frac{\pi\beta}{\sin\pi\beta} - 1\bigg)\bigg)^p = 1 + O(\beta^2)\]
so
\[\int_0^1 |v(\beta)|^sd\beta = 2\int_0^{1/2} |\sin((X + 1)\pi\beta)|^s(\pi\beta)^{-s} d\beta + O\bigg(\int_0^{1/2}|\sin((X + 1)\pi\beta)|^s\beta^{2 - s}d\beta\bigg).\]
By the bound $|\sin((X + 1)\pi\beta)|^s\ll \min(1, (\beta X)^s)$ that the term inside the $O(-)$ is 
\[\ll\int_0^{1/X} (\beta X)^s\beta^{2 - s}d\beta + \int_{1/X}^{1/2} \beta^{2 - s}d\beta\ll X^{s - 2}.\]
Now, by a change of variables, the main term equals 
\begin{align*}
\frac{2}{\pi(X + 1)}\int_0^{(X + 1)\pi/2} |\sin t|^s(t/(X + 1))^{-s} dt &= \frac{2}{\pi}(X + 1)^{s - 1}\int_0^{(X + 1)\pi/2} \frac{|\sin t|^s}{t^s}dt\\
        &= \frac{2}{\pi}X^{s - 1}\int_0^\infty \frac{|\sin t|^s}{t^s}dt + O(X^{s - 2}).
\end{align*}
as we have by a trivial bound that
\[\int_0^{(X + 1)\pi/2} \frac{|\sin t|^s}{t^s}dt = \int_0^\infty \frac{|\sin t|^s}{t^s}dt + O(X^{1 - s}).\]
The desired result follows.
\end{proof}
We will now prove Proposition \ref{prop:MT_est} using Proposition \ref{prop:dir_ker_moment}.
\begin{proof}[Proof of Proposition \ref{prop:MT_est}]
From the definition of $\mf M$, we have that
\begin{align*}
\int_{\mf M} |M(\alpha)|^sd\alpha &= \sum_{q\le P}\sumCp_{a(q)}\int_{-P/X}^{P/X} \bigg|M\left(\frac{a}{q} + \beta\right)\bigg|^s d\beta\\
        &= \sum_{q\le P}\varphi(q)Q_{k, q}(\log X)^s\int_{-P/X}^{P/X} |v(\beta)|^s d\beta + O(X^{s - 1} P^{\frac{9}{2} + \frac{1}{k + 1}}X^{-\frac{2}{k + 1} + \eps}).
\end{align*}
We may extend the range of integration to $[-1/2, 1/2]$ at a total loss of $\ll P(X/P)^{s - 1} (\log X)^{O(1)}\ll X^{s - 1 - (s - 2)\eta}(\log X)^{O(1)}$ by the bound $v(\beta)\ll \min(X, \norm\beta^{-1})$. 
Applying Proposition \ref{prop:dir_ker_moment} then yields that the above equals
\begin{align*}
A_sX^{s - 1}\sum_{q\le P}\varphi(q)Q_{k, q}(\log X)^s  + O(X^{s - 1} P^{\frac{9}{2} + \frac{1}{k + 1}}X^{-\frac{2}{k + 1} + \eps} + X^{s - 1}X^{-(s - 2)\eta + \eps}).
\end{align*}
Now, writing $Q_{k, q}(\log X) = \alpha_0(q) + \dots + \alpha_{k - 1}(q)(\log X)^{k - 1}$, we obtain that 
\begin{align*}
Q_{k, q}(\log X)^s &= (\log X)^{s(k - 1)}\alpha_0(q)^s\left(1 + \frac{\alpha_1(q)\alpha_0(q)^{-1}}{\log X} + \dots + \frac{\alpha_{k - 1}(q)\alpha_0(q)^{-1}}{(\log X)^{k - 1}}\right)^s \\
        &= \alpha_0(q)^{s}(\log X)^{s(k - 1)}\sum_{\ell\ge 0}\frac{(s)\dots (s - \ell + 1)}{\ell!}\cdot\frac{\beta_\ell(q)}{(\log X)^\ell}
\end{align*}
for some coefficients $\beta_{\ell}(q)$ with $\beta_0(q) = 1$, and $|\beta_{\ell}(q)|\ll\tau_2(q)^{O(\ell)}$ for $\ell\ge 1$. Here, we have use the fact that $\alpha_0(q)$ is nonnegative and $\gg 1/q$.
Executing the summation over $q$, we thus obtain that 
\[\int_{\mf M} |M(\alpha)|^sd\alpha = X^{s -  1}(\log X)^{s(k - 1)}\sum_{\ell\ge 0}\frac{\gamma_{s, k,\ell}}{(\log X)^\ell} 
+ O(X^{s - 1 + \left(\frac{9}{2} + \frac{1}{k + 1}\right)\eta - \frac{2}{k + 1}  + \eps} + X^{s - 1 - (s - 2)\eta + \eps})\]
for some coefficients $\gamma_{s, k, \ell}$ satisfying the bound $|\gamma_{s, k,\ell}|\ll \exp(O(\ell))$. The desired result follows from our choice of $\eta$, as $(s - 2)\eta = 2\delta_{s, k}$, and as we noted 
previously
\[\left(\frac{9}{2} + \frac{1}{k + 1}\right)\eta - \frac{2}{k + 1} = -\frac{s - 2}{2}\eta = -\delta_{s, k}.\]
\end{proof}

\section{The minor arcs}
To bound $M(\alpha)$ on the minor arcs, we shall use the following bound. This is essentially the same bound one obtains in the case of the von Mangoldt function. Our proof proceeds in the same manner as
this case, through a decomposition of $\tau_k(n)$ into type I and type II sums.

\begin{proposition}
Supposed that $\alpha, a, q$ are so that $(a, q) = 1, |\alpha - \frac{a}{q}|\le 1/q^2$. Then, we have that 
\[\bigg|\sum_{n\le X} \tau_k(n)e(n\alpha)\bigg|\ll \left(\sqrt{qX} + \frac{X}{\sqrt{q}} + X^{4/5}\right)(\log X)^{O(1)}\]
\label{prop:min_arc_est}
\end{proposition}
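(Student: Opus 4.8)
The plan is to bound the exponential sum $\sum_{n\le X}\tau_k(n)e(n\alpha)$ by splitting $\tau_k$ into ``type I'' (linear) and ``type II'' (bilinear) pieces, exactly as in the Vinogradov--Vaughan treatment of $\Lambda(n)$, but using the self-evident convolution identity $\tau_k = \mathbf 1 * \tau_{k-1}$ (equivalently $\tau_k = \mathbf 1 * \cdots * \mathbf 1$, $k$-fold) in place of Vaughan's identity. Concretely, I would write
\[
\sum_{n\le X}\tau_k(n)e(n\alpha) = \sum_{d_1\cdots d_k \le X} e(d_1\cdots d_k\,\alpha),
\]
and dyadically decompose each variable $d_i \sim D_i$ with $D_1\cdots D_k \ll X$. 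In any such block, either some $D_i$ is ``small'', say $\le X^{2/5}$, in which case we sum that variable (and all the others we can) as a smooth/linear sum and treat the block as a type I sum; or every $D_i > X^{2/5}$, in which case, since $k\ge 2$ and $D_1\cdots D_k\asymp X$, we can partition the indices into two groups whose products $M$ and $N$ both lie in $[X^{2/5}, X^{3/5}]$ (start adding the $D_i$'s until the running product first exceeds $X^{2/5}$; it cannot exceed $X^{3/5}$ since each factor is $\le X^{3/5}$), giving a genuine bilinear (type II) sum $\sum_{m\sim M}\sum_{n\sim N} a_m b_n e(mn\alpha)$ with $|a_m|, |b_n|\ll \tau_k(m), \tau_k(n) \ll X^\eps$ and $MN\asymp X$.

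Next I would invoke the two standard estimates. For the type I sums: for fixed values of all but one variable, the inner sum $\sum_{d\le Y} e(d\cdot c\,\alpha)$ over an interval is $\ll \min(Y, \|c\alpha\|^{-1})$, and summing this over the remaining ``long'' variable (whose coefficient $c$ ranges over a set of integers up to $X/Y \ge X^{3/5}$) and using the divisor-sum bound $\sum_{r\le R}\min(X/r, \|r\alpha\|^{-1}) \ll (X/q + R + q)(\log X)^{O(1)}$ (from $|\alpha - a/q|\le q^{-2}$), one gets a contribution $\ll (X/q + X + q)(\log X)^{O(1)}$... wait, more carefully the type I blocks have the short variable $\le X^{2/5}$, so the long variables fill a range of length $\ge X^{3/5}$, and one obtains $\ll (X q^{-1} + X^{2/5}\cdot(\text{stuff}) + q)(\log X)^{O(1)}$; after optimizing and collecting all dyadic blocks this is absorbed into $(\sqrt{qX} + X/\sqrt q + X^{4/5})(\log X)^{O(1)}$. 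For the type II sums I would use the standard bilinear (large-sieve / Cauchy--Schwarz) bound: for $MN\asymp X$ with $X^{2/5}\le M, N \le X^{3/5}$,
\[
\sum_{m\sim M}\sum_{n\sim N} a_m b_n e(mn\alpha) \ll X^\eps\left(X q^{-1/2} + X^{1/2}M^{1/2} + X^{1/2}N^{1/2} + (qX)^{1/2}\right) \ll X^\eps\left(X q^{-1/2} + (qX)^{1/2} + X^{4/5}\right),
\]
using $M, N \le X^{3/5}$ to bound $X^{1/2}M^{1/2}, X^{1/2}N^{1/2} \le X^{1/2+3/10} = X^{4/5}$. Replacing $X^\eps$ by $(\log X)^{O(1)}$ is legitimate here because the coefficients $a_m, b_n$ are bounded by divisor functions, so a more careful count of $\ell^2$-norms (rather than the crude $X^\eps$) yields only logarithmic losses; this is where the ``convolution structure makes the decomposition more straightforward'' remark of the introduction pays off, since unlike $\Lambda$ there are no Möbius-twisted pieces to control.

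The order of the argument is thus: (i) set up the dyadic decomposition of the $k$-fold sum and record the case dichotomy based on whether some $D_i \le X^{2/5}$; (ii) handle the type I blocks via $\sum_{r\le R}\min(X/r,\|r\alpha\|^{-1}) \ll (X/q + R + q)\log X$; (iii) in the type II case, carry out the greedy grouping of the indices to split into two products in $[X^{2/5}, X^{3/5}]$; (iv) apply the bilinear estimate and use the range restriction to bound the ``diagonal'' terms by $X^{4/5}$; (v) sum over the $O((\log X)^k)$ dyadic blocks, losing only a further $(\log X)^{O(1)}$, and collect everything into $(\sqrt{qX} + X/\sqrt q + X^{4/5})(\log X)^{O(1)}$. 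The main obstacle is step (iii)/(iv): one must verify that the greedy partition always succeeds (which needs $k\ge 2$ and each $D_i\le X^{3/5}$, itself guaranteed because the \emph{other} $k-1$ variables each exceed $X^{2/5}$ so their product exceeds $X^{2(k-1)/5}\ge X^{2/5}$, forcing $D_i \le X^{3/5}$), and that the resulting coefficients genuinely satisfy the $\ell^2$ bounds needed to get $(\log X)^{O(1)}$ rather than $X^\eps$ in the bilinear estimate; the type I analysis and the final bookkeeping are routine.
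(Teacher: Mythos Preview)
Your overall strategy---dyadically decompose the $k$-fold convolution and split into type~I and type~II pieces---is the same as the paper's, but your case dichotomy is set up incorrectly and, as written, only works for $k=2$. You split according to whether \emph{some} $D_i\le X^{2/5}$. For $k\ge 3$, if every $D_i>X^{2/5}$ then $D_1\cdots D_k>X^{2k/5}\ge X^{6/5}$, contradicting $D_1\cdots D_k\asymp X$; so your type~II case is vacuous and \emph{every} block lands in your type~I case. But your type~I handling then fails: you either sum the small $D_i$ as the inner linear variable (giving only the trivial bound when $D_i$ is tiny), or you implicitly treat the product of the remaining $k-1$ variables as a single smooth variable, which it is not. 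Concretely, take $k=5$ with all $D_i=X^{1/5}$: there is no single long variable to execute as a geometric series, yet your scheme declares this a type~I block. Your greedy justification is also off: crossing the level $X^{2/5}$ by one factor lands in $[X^{2/5},X^{3/5}]$ only if each factor is $\le X^{1/5}$, not merely $\le X^{3/5}$.

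The fix, and what the paper does, is to put the threshold at $X^{1/5}$ on the \emph{largest} variable. If $\max_i D_i>X^{1/5}$, take that variable as the inner linear sum (type~I, outer product $\ll X^{4/5}$, so Lemma~13.7 of Iwaniec--Kowalski gives $\ll (X/q+X^{4/5}+q)(\log X)^{O(1)}$, which is absorbed by the stated bound). If all $D_i\le X^{1/5}$, then your greedy grouping is now justified and yields two groups with products in $[X^{2/5},X^{3/5}]$, after which your step~(iv) with the bilinear bound goes through exactly as you wrote. With this single change of threshold, the rest of your outline is correct.
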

\begin{proof}
First, it is easy to see by splitting into dyadic intervals that it suffices to show the result with a sum over $n\sim X$, so we shall assume this from now on.

Our proof follows similarly to the proof of minor arc bounds for the exponential sum with the von Mangoldt function, though our decomposition into type I and II sums will follow straightforwardly from the 
structure of $\tau_k$ as a Dirichlet convolution. Note that 
\[\tau_k\charf{[1, 2X]} = \bigg(\substack{\underbrace{\charf{[1, 2X]}*\dots*\charf{[1, 2X]}}\\\text{k times}}\bigg)\charf{[X, 2X]}. \]
Decomposing $[1, 2X]$ into dyadic intervals then yields that this is a linear combination (with coefficients of size $O(1)$) of $O((\log X)^{O(1)})$ summands of the form
\[(\charf{I_1}*\dots*\charf{I_k})\charf{[X, 2X]}\]
where $I_j$ is of the form either $[N_j, 2N_j]$ or $[N_j, 2X]$ (with $N_j\ge X$ in the second case) for all $j$, for some $1\le N_1\le\dots\le N_k$
satisfying $N_1\dots N_k\asymp X$. It suffices then to show the bound in the proposition for sums of the form
\[\sum_{n\sim X} (\charf{I_1}*\dots*\charf{I_k})(n)e(n\alpha).\]
We have two cases. If all the $N_k\le X^{1/5}$, then there exists a $j$ so that $X^{2/5}\ll N_1\dots N_j, N_{j + 1}\dots N_k\ll X^{3/5}$
so it follows that the sum equals 
\[\sum_{\substack{m\asymp N_1\dots N_j\\ n\asymp N_{j + 1}\dots N_k}} a(m)b(n)e(\alpha mn)\]
for some coefficients $a(m), b(n)$ so that $|a(m)|\ll \tau_j(m), |b(n)|\ll\tau_{k - j}(n)$. The bound then follows from a standard bound for type II sums (Lemma 13.8 in \cite{IK}, for example, suffices).

Otherwise, we have that $N_k > X^{1/5}$, so the sum equals 
\[\sum_{n\asymp N_k, m\asymp  N_1\dots N_{k - 1}} a(m)e(\alpha mn)\] 
for some coefficients $a(m)$ bounded by $\tau_{k - 1}(n)$, so the desired result then follows standard bounds on type I sums, such as Lemma 13.7 in \cite{IK}.

\end{proof}

\emph{Acknowledgements.} The author would like to thank the anonymous referee for various corrections as well as for pointing out the simplified decomposition into type I and II sums used in the 
proof of Proposition \ref{prop:min_arc_est}.

\end{document}